\newtheorem{theorem}{Theorem}
\newtheorem{corollary}[theorem]{Corollary}
\newtheorem{conjecture}[theorem]{Conjecture}
\theoremstyle{definition}
\def \co {\mathcal{O}}
\def \pt {\rm{pt}}
\newcommand{\Ok}{{\mathcal O}_k}
\def \kbar {\overline{k}}
\begin{document}
\bibliographystyle{amsplain}
\title[The exceptional set in Vojta's conjecture for algebraic points]{The exceptional set in Vojta's conjecture for algebraic points of bounded degree}
\author{Aaron Levin}
\address{Department of Mathematics\\Michigan State University\\East Lansing, MI 48824}
\email{adlevin@math.msu.edu}
\date{}

\begin{abstract}
We study the dependence on various parameters of the exceptional set in Vojta's conjecture.  In particular, by making use of certain elliptic surfaces, we answer in the negative the often-raised question of whether Vojta's conjecture holds when extended to all algebraic points (that is, if the conjecture holds without fixing a bound on the degree of the algebraic points).
\end{abstract}
\maketitle
\
\section{Introduction}

Inspired by results in Nevanlinna theory, Vojta \cite{Vo, Vo2} made the following deep conjecture in Diophantine approximation for algebraic points of bounded degree in a nonsingular complete variety.
\begin{conjecture}[Vojta's Conjecture]
\label{Vgeneral}
Let $X$ be a nonsingular complete variety with canonical divisor $K$.  Let $D$ be a normal crossings divisor on $X$, and let $k$ be a number field over which $X$ and $D$ are defined.  Let $S$ be a finite set of places of $k$, $A$ a big divisor on $X$, $\epsilon>0$, and $r$ a positive integer.  Then there exists a proper Zariski-closed subset $Z=Z(r,\epsilon,X,D,A,k,S)$ of $X$ such that
\begin{equation}
\label{Vineq}
m_{D,S}(P)+h_K(P)\leq d(P)+\epsilon h_A(P)+O(1)
\end{equation}
for all points $P\in X(\kbar)\setminus Z$ with $[k(P):k]\leq r$.
\end{conjecture}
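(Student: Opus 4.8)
The plan is to work through Vojta's dictionary with Nevanlinna theory, under which $m_{D,S}(P)$ corresponds to the Nevanlinna proximity term for $D$, $h_K(P)$ to the characteristic function of the canonical bundle, $d(P)$ to the ramification (counting) term, and $\epsilon h_A(P)+O(1)$ to the error term; on the analytic side \eqref{Vineq} becomes Cartan's Second Main Theorem for hyperplane configurations and, in general, the still-open Griffiths conjecture, both known in a range of cases. So the realistic target is to transpose those cases arithmetically. The arithmetic engine in each case will be the Schmidt Subspace Theorem (and its generalizations), and the role of the degree bound $r$ is to make the different term $d(P)$ usable: a point $P$ with $[k(P):k]\leq r$ produces, via restriction of scalars and symmetric powers, a $k$-rational point on an auxiliary variety, and $d(P)$ is precisely the logarithmic different of $k(P)/k$, which enters the Subspace Theorem in Vojta's refined form.

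First I would settle the model situation $X=\mathbb{P}^n$ with $D$ a sum of $q\geq n+2$ hyperplanes in general position, so that $K+D$ is ample; here \eqref{Vineq} follows directly from the Subspace Theorem once the point has been moved to a rational point as above, and the exceptional set $Z$ is the union of the finitely many linear subspaces the theorem outputs together with the locus where general position degenerates. For more general $D$ I would run the Corvaja--Zannier--Autissier--Evertse--Ferretti machinery: fix a large $N$, build at each $v\in S$ a filtration of $H^0(X,NA')$ by order of vanishing along the components of $D$ passing through $P$ (for a suitable ample $A'$), estimate the total forced vanishing by a Hilbert-polynomial count, and feed the resulting sections into the Subspace Theorem, absorbing $h_K$ by comparing $NA'$ with $N(K+D)$ on the relevant subvariety. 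The set $Z$ then collects the base loci that appear and the exceptional subspaces of the Subspace Theorem, restricted to $X$. Treating several places and several components simultaneously is bookkeeping but not conceptually new.

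The main obstacle — and the reason \eqref{Vineq} remains only a conjecture — is that this plan stalls for a general nonsingular $X$ with general normal crossings $D$: the filtration argument needs many sections of $N(K+D)$ concentrated along $D$, which one knows how to produce only in special geometries (toric varieties, semiabelian varieties and their subvarieties, products of curves, certain surfaces). And the difficulty is not technical: the case $D=0$ with $X$ of general type is the Bombieri--Lang conjecture, and the case of small-dimensional $X$ with $K$ nearly trivial and $D$ a smooth ample divisor is known to imply the $abc$ conjecture, so a complete proof along these lines is well beyond reach. What can honestly be carried out is \eqref{Vineq} in the families just listed; for the general statement one is limited to formal reductions — to arbitrarily small $\epsilon h_A(P)$, to a smooth projective model, to $D$ irreducible after blowing up — that any eventual proof would have to begin from. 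This is also what makes the removal of the hypothesis $[k(P):k]\leq r$, studied below, so delicate: without it the different term $d(P)$ can no longer be controlled in terms of the arithmetic of a bounded family of fields.
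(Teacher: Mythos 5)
This statement is Vojta's Conjecture itself, and the paper does not prove it --- it is stated as an open conjecture and then used as a hypothesis in the paper's actual results (which concern the structure of the exceptional set $Z$, not the truth of the inequality). There is therefore no proof in the paper to compare against, and your write-up correctly recognizes this: you give an honest survey of the Nevanlinna-theoretic heuristic, the known arithmetic cases (Subspace Theorem for hyperplanes, Corvaja--Zannier/Autissier/Evertse--Ferretti filtration arguments, semiabelian and toric geometries), and the reasons the general case is out of reach ($D=0$ is Bombieri--Lang, certain $D$ ample cases imply $abc$). That is the right assessment. The one thing worth flagging is that your last sentence --- that removing the bound $[k(P):k]\leq r$ is ``delicate'' --- understates what this paper actually establishes: Theorem \ref{tsem} and Corollary \ref{calg} show that the conjecture, assuming it holds for each fixed $r$, genuinely \emph{fails} at $r=\infty$ (the exceptional set must grow with $r$), so it is not merely that $d(P)$ becomes hard to control but that the inequality as written is false for all algebraic points. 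If you were aiming to contextualize the conjecture within this paper, that negative result is the main point, and it would be worth stating that the paper's contribution is to demonstrate the necessity of the degree bound rather than to make progress toward a proof.
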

Here $m_{D,S}$ is a proximity function, measuring the part of the height relative to $D$ coming from places lying above places in $S$, $h_K$ and $h_A$ are absolute logarithmic height functions associated to the divisors $K$ and $A$, respectively, and $d(P)=\frac{1}{[k(P):\mathbb{Q}]}\log |D_{k(P)/\mathbb{Q}}|$ is the absolute logarithmic discriminant.  The set $Z$ is called the {\it exceptional set}.  We refer the reader to \cite{Vo} for a more thorough discussion of the definitions and properties of the objects appearing in the conjecture.

Vojta's Conjecture has a wide range of important consequences (see \cite{Vo}), of which we just mention the $abc$-conjecture and the Bombieri-Lang Conjecture on rational points on varieties of general type.  Various strengthened versions of Vojta's Conjecture have frequently appeared in the literature, often asserting that the exceptional set $Z$ in the conjecture can be chosen independently of some of the parameters appearing in the conjecture.  It is this aspect of Vojta's Conjecture that we wish to study here, that is, the dependence of $Z$ on the parameters $r,\epsilon,X,D,A,k,$ and $S$.

That the exceptional set $Z$ must depend on $X$ and $D$ is trivial (indeed, $m_{D,S}(P)$ is not even well-defined for points in the support of $D$).  The first nontrivial general observation regarding the dependence of $Z$ on the parameters is due to Vojta himself, who showed in \cite{Vo3} that, in general, the set $Z$ must be allowed to depend on the parameter $\epsilon$.  Vojta's example in \cite{Vo3} used quadratic points (i.e., $r=2$).  In \cite{LMW}, it was shown that $Z$ must be allowed to depend on $\epsilon$ even when the conjecture is restricted to rational points ($r=1$).  Of course, in specific cases, it may happen that $Z$ can be chosen independently of $\epsilon$.  For instance, in Schmidt's Subspace Theorem ($r=1$, $X=\mathbb{P}^n$, $D$ a sum of hyperplanes in general position), one of the few cases where Vojta's Conjecture is known,  Vojta showed \cite{Vo3} that the exceptional set $Z$ (which can be taken to consist only of hyperplanes) can in fact be taken to be independent of the parameter $\epsilon$.  We note also that the dependence of $Z$ on $\epsilon$ has ramifications for the form of the error term in \eqref{Vineq}.  In particular, contrary to a conjecture of Lang \cite{Lan} for the error term in Roth's theorem ($X=\mathbb{P}^1$, $r=1$), in higher dimensions one cannot replace the term $\epsilon h_A(P)$ in \eqref{Vineq} by a term like $\log h_A(P)$.  In fact, if one considers Vojta's conjecture with ``truncated counting functions", by a result of Stewart and Tijdeman \cite{Tij} (see also \cite{vF}), this same phenomenon occurs even in the one-dimensional case, where the error term is essentially the same as the error term in the $abc$ conjecture (see \cite{vF2}).

In Section \ref{depA}, we show that $Z$ must be allowed to depend on the big divisor $A$.  In allowing arbitrary big divisors $A$, this is hardly surprising since the associated height $h_A$ may not even be positive for points in the base locus of the linear system $|A|$.  However, we will show that even if one restricts to ample divisors $A$ and rational points ($r=1$), the set $Z$ must be allowed to depend on $A$.  Since for any two ample divisors $A$ and $A'$ on a variety $X$ we have $h_A\gg\ll h_{A'}$, this phenomenon is necessarily closely related to the dependence of $Z$ on $\epsilon$.

We have nothing new to offer regarding the dependence of $Z$ on the number field $k$ or the set of places $S$.  If $Z$ could not be taken independent of $k$ and $S$, this would imply that the exceptional set $Z$ is of a highly nongeometric nature, violating the spirit of Vojta's Conjecture.  Consequently, Conjecture \ref{Vgeneral} is frequently stated with the additional hypothesis that $Z$ can be chosen independently of $k$ and $S$.  Adopting this point of view, from now on we will notationally omit the (possible) dependence of $Z$ on $k$ and $S$.

Finally, we will give examples showing that the exceptional set $Z$ in Vojta's Conjecture must be allowed to depend on the parameter $r$.  Explicitly, we will show this for a large class of elliptic surfaces.
\begin{theorem}
\label{tsem}
Let $X$ be a semistable elliptic surface over $\mathbb{P}^1$ with a section over a number field $k$.  Let $D$ be a sum of fibers of $X$ containing the singular fibers.  If Vojta's conjecture holds for $X$ and $D$, then the exceptional set $Z(r,\epsilon,X,D,A)$ must depend on the parameter $r$.
\end{theorem}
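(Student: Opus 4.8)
The plan is to reduce Vojta's inequality for $X$ to an inequality on the base $\mathbb{P}^1$ that is blind to the fibre direction, and then to violate it along points lying on multisections produced by the base change $t=u^n$, whose degree over $k$ is forced to grow with $n$. For the reduction: discarding the trivial isotrivial case (there $X$ has no singular fibre, $\chi(\mathcal O_X)=0$, and the statement is vacuous), $X$ is a relatively minimal semistable elliptic surface with a section, hence has no multiple fibres, so the canonical bundle formula gives $K_X\sim(\chi-2)F$ with $\chi:=\chi(\mathcal O_X)\ge 1$ and $F$ a fibre; moreover $D=\pi^*D'$ for a reduced effective divisor $D'$ on $\mathbb{P}^1$ whose support contains the (at least four, by Riemann--Hurwitz for the $j$-map) points below the singular fibres. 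After a coordinate change on $\mathbb{P}^1$ I may assume $0$ and $\infty$ lie below singular (hence multiplicative) fibres, so in particular $0,\infty\in\Supp D'$. For $P\in X(\kbar)$ with $\pi(P)\notin\Supp D'$ one has $h_K(P)=(\chi-2)h(\pi(P))+O(1)$ and $m_{D,S}(P)=m_{D',S}(\pi(P))+O(1)$, where $h$ is the standard height on $\mathbb{P}^1$, so \eqref{Vineq} becomes
$$m_{D',S}(\pi(P))+(\chi-2)h(\pi(P))\le d(P)+\epsilon h_A(P)+O(1),$$
and the left side depends on $P$ only through $\pi(P)$.

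Since $h_A\asymp h_{A'}$ for ample $A,A'$, it suffices to exhibit one pair $(\epsilon,A)$ — $A$ ample, $\epsilon$ small — for which no exceptional set independent of $r$ works; fix such an $A$. For $n\ge 1$ let $X_n\to\mathbb{P}^1_u$ be the relatively minimal model of the base change of $X$ along $t=u^n$; it is again semistable with a section, and the natural dominant map $f_n\colon X_n\dashrightarrow X$ has degree $n$. The key geometric input — and this is where semistability enters — is that the multiplicative fibre of $X$ over $t=0$, via its Tate ($\mathbb G_m$-) uniformization, supplies for each $n$ a Mordell--Weil class of $X_n$ that is new (not pulled back from $X$) and of small height, defined over $k_n(u)$ for a number field $k_n$; the corresponding section $\sigma_n$ has image $C_n:=\overline{f_n(\sigma_n)}\subset X$, an irreducible curve with $\pi|_{C_n}$ of degree $n$, branched over $\mathbb{P}^1$ only in $\{0,\infty\}$, and with $A\cdot C_n\le Bn$ for a constant $B=B(X,A)$ independent of $n$ (using the height bound on $\sigma_n$ and that $X_n$ has $O(n)$ fibre components). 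Now fix $\epsilon>0$ with $\epsilon B<\chi$. Given any proper Zariski-closed $Z_0\subseteq X$ — a finite union of curves — the $C_n$ being pairwise distinct, there is an $n$ with $C_n\not\subseteq Z_0$; fix it and set $r:=n[k_n:k]$.

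Next choose rational primes $p<q$ of residue characteristic $>n$ with $\log p/\log q$ as close to $1$ as desired, add all places of $k$ above $p$ and $q$ to $S$, and put $t_N:=p^N/q^N\in\mathbb{P}^1(k)$ for $N=1,2,\dots$. Then $h(t_N)=N\log q+O(1)$, and using $0,\infty\in\Supp D'$ one gets $m_{D',S}(t_N)\ge N(\log p+\log q)+O(1)$, so $m_{D',S}(t_N)+(\chi-2)h(t_N)\ge(\chi-\delta)h(t_N)+O(1)$ with $\delta$ as small as we wish. Let $P_N\in C_n$ lie over $t_N$. Then $[k(P_N):k]\le n[k_n:k]=r$ for all $N$; and since $\pi|_{C_n}$ is branched only over $\{0,\infty\}$ while $t_N$ is a unit outside $p,q$, the extension $k(P_N)/k_n$ is tamely ramified, with bounded ramification, only over $p,q$ and a fixed finite set, so $k(P_N)$ runs over finitely many fields and $d(P_N)=O(1)$. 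Finally $h_A(P_N)=\tfrac{A\cdot C_n}{n}h(t_N)+O(1)\le B\,h(t_N)+O(1)$, so the right side of the displayed inequality is $\le(\epsilon B+o(1))h(t_N)+O(1)$, which for $\delta$ small is eventually strictly less than the left side $(\chi-\delta)h(t_N)+O(1)$, the margin being $\gg h(t_N)\to\infty$. Since $C_n\cap Z_0$ is finite and the $\pi(P_N)$ are distinct, $P_N\notin Z_0$ and $\pi(P_N)\notin\Supp D'$ for $N$ large. Thus Vojta's inequality for $X,D$ fails outside $Z_0$ for points of degree $\le r$; as $Z_0$ was arbitrary, $Z(r,\epsilon,X,D,A)$ cannot be chosen independently of $r$.

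The step I expect to be the main obstacle is the construction and height control of the new Mordell--Weil classes $\sigma_n$ in the second paragraph: one must show both that the Tate point up the tower $t=u^n$ is genuinely new over $k_n(u)$ (and that it descends from the local uniformization to the global function field, with control on the constant field $k_n$) and, above all, that it can be taken small enough that $A\cdot C_n=O(n)$ rather than $O(n^2)$ — it is precisely the multiplicative reduction that makes an explicit small class available, and everything else is bookkeeping with heights and local fields.
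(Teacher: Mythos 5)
Your overall reduction to the base $\mathbb{P}^1$, your use of $K_X\sim(\chi-2)F$, and your elementary height computation with the points $t_N=p^N/q^N$ are all correct in themselves, and the route is genuinely different from the paper's. But the step you yourself flag as ``the main obstacle'' is in fact a real gap, not merely a technicality, and I do not see how to fill it in the form you state. The Tate uniformization $E\cong\mathbb{G}_m^{\mathrm{an}}/q^{\mathbb Z}$ is a statement over the completed local field $k((t))$ (or $k((u))$ after base change); it does not produce a global section of the elliptic surface $X_n\to\mathbb{P}^1_u$. There is no general mechanism turning the ``local Tate point'' into a Mordell--Weil class over $k_n(u)$, and indeed the existence of a \emph{new} section for every $n$ in the cyclic tower $t=u^n$ would amount to unbounded rank growth in such towers, which is not a theorem and is known to fail for various non-isotrivial elliptic surfaces. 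Even granting existence, the crucial bound $A\cdot C_n=O(n)$ (rather than the naive $O(n^2)$, which would make $h_A(P_N)$ too large) requires the canonical height of $\sigma_n$ to stay bounded as $n\to\infty$, and you offer no control over this. Your argument also demands that $C_n\to\mathbb{P}^1$ be branched \emph{only} over $\{0,\infty\}$; that is an extremely restrictive condition, essentially forcing $C_n$ to come from the base change $t=u^n$, which is precisely why you are pushed toward the unproven Tate construction.

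For comparison, the paper avoids all of this by using curves that manifestly exist: the irreducible components $C$ of $[m]^{-1}(\sigma)$, where $[m]$ is fibrewise multiplication by $m$ and $\sigma$ is the zero section. These torsion multisections always exist and give infinitely many distinct curves of growing degree over the base. The price is that $C\to\sigma$ is branched over all of $D$ (not just two points), so one cannot force $d(P)=O(1)$ by choosing clever base points. Instead the paper produces $S$-integral points $T$ on $\sigma\setminus D$ of degree at most $N/2$ via a degree-$N/2$ map $\sigma\setminus D\to\mathbb{G}_m$, pulls them back to $\tilde T=\phi^{-1}(T)\subset C$, and then bounds $d(P)$ \emph{linearly} in $h_F(P)$ by combining Mahler's inequality $d(P)\le(2r-2)h(P)+O(1)$ on $\mathbb{P}^1$ with the Chevalley--Weil theorem along the unramified-outside-$D$ cover $\phi$. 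The margin in the final inequality then comes from $N+\chi-2>N-2$, i.e.\ from $\chi(\mathcal O_X)>0$, exactly as in your argument, but without ever needing a small new Mordell--Weil class. If you want to salvage your approach, you would need either a proof of the Tate-section claim with the $O(n)$ height bound for a specific well-chosen $X$, or to replace the $C_n$ by curves whose existence is unconditional (such as the torsion multisections), at which point you will be forced into a Mahler/Chevalley--Weil type bound on $d(P)$ and will essentially recover the paper's proof.
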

We can also produce similar examples (with a different, but related, class of surfaces $X$) where $D=0$ (Theorem \ref{D0}).

As a consequence of Theorem \ref{tsem}, we obtain that Vojta's Conjecture does not hold uniformly for all algebraic points, answering a question that has frequently been raised (e.g., \cite[p.\ 64]{Vo} or \cite[p.\ 223]{Lan2}).
\begin{corollary}
\label{calg}
Vojta's Conjecture does not hold when extended to $r=\infty$ (i.e., to all algebraic points), even when allowing the $O(1)$ term to depend on $[k(P):k]$.
\end{corollary}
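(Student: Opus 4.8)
The plan is to deduce Corollary~\ref{calg} from Theorem~\ref{tsem} by a short argument by contradiction, so the real content is already contained in Theorem~\ref{tsem}. First I would spell out what the extension to $r=\infty$ asserts: for every nonsingular complete variety $X$ with a normal crossings divisor $D$, both defined over a number field $k$, every finite set of places $S$ of $k$, every big divisor $A$ on $X$, and every $\epsilon>0$, there is a proper Zariski-closed set $Z=Z(\epsilon,X,D,A)$ such that
\[
m_{D,S}(P)+h_K(P)\le d(P)+\epsilon h_A(P)+O_{[k(P):k]}(1)
\]
for all $P\in X(\kbar)\setminus Z$, where the implied constant may depend on the degree $[k(P):k]$ but on nothing else about $P$. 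I would assume such a $Z$ exists and derive a contradiction.

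Next I would fix a semistable elliptic surface $X$ over $\mathbb{P}^1$ with a section, defined over a number field $k$ and having at least one singular fiber (such surfaces certainly exist), and let $D$ be a sum of fibers of $X$ containing all the singular fibers, so that the hypotheses of Theorem~\ref{tsem} are met. Fixing also a big divisor $A$ on $X$ and an $\epsilon>0$, let $Z=Z(\epsilon,X,D,A)$ be as in the $r=\infty$ statement. The key observation is that restricting the displayed inequality to points $P$ with $[k(P):k]\le r$ turns the term $O_{[k(P):k]}(1)$ into a genuine $O(1)$: the degree then takes only the finitely many values $1,\dots,r$, so the maximum of the corresponding implied constants is an admissible $O(1)$ depending on $r$ but not on $P$. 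Hence, for each positive integer $r$, the \emph{same} set $Z$ is a valid exceptional set $Z(r,\epsilon,X,D,A)$ in Conjecture~\ref{Vgeneral} for this $X$ and $D$; in particular, Vojta's Conjecture holds for $X$ and $D$, with an exceptional set that does not depend on $r$.

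Finally, Theorem~\ref{tsem} applies to $X$ and $D$, and its conclusion is exactly that the exceptional set $Z(r,\epsilon,X,D,A)$ \emph{must} depend on $r$, contradicting the $r$-independence established in the previous step. This contradiction shows that the extension of Vojta's Conjecture to $r=\infty$, even with a degree-dependent error term, fails. I do not anticipate a genuine obstacle here: the only point requiring a little care is the passage from a degree-dependent error term to a genuine $O(1)$ once the degree is bounded — which is precisely why the statement explicitly permits the $O(1)$ to depend on $[k(P):k]$ — together with the (standard) remark that at least one surface $X$ and divisor $D$ satisfying the hypotheses of Theorem~\ref{tsem} exists.
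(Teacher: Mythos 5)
Your proof is correct and matches the paper's intent exactly. The paper presents Corollary~\ref{calg} as an immediate consequence of Theorem~\ref{tsem} without spelling out the deduction; your argument supplies it, with the key observation being that a degree-dependent $O(1)$ collapses to a genuine $O(1)$ once $[k(P):k]\le r$ is imposed, so a single $r$-independent exceptional set coming from the $r=\infty$ statement would contradict Theorem~\ref{tsem}.
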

This is somewhat surprising since the analogous function field version of Vojta's Conjecture has been proved for curves (independently by McQuillan \cite{McQ} and Yamanoi \cite{Yam}) with $r=\infty$.  Moreover, previous results in the equidimensional Nevanlinna theory of coverings of $\mathbb{C}^n$ \cite{Ch},\cite[Ch. IV]{LC}  also appeared to support the possibility that Vojta's Conjecture would hold uniformly for all algebraic points.  

Despite Corollary \ref{calg}, it is still quite plausible that Vojta's Conjecture may hold uniformly for all algebraic points after making some simple modifications to the terms in the inequality \eqref{Vineq}.  One possibility is given by the original form of the conjecture proposed in \cite{Vo}.  In \cite{Vo}, Vojta included a $\dim X$ factor in the conjecture in front of the discriminant term $d(P)$, but this factor was later thought to be unnecessary (see, e.g., the comment after Conjecture 2.1 in \cite{Vo2}).  None of the examples presented here prevent the possibility that Conjecture \ref{Vgeneral} could hold for all algebraic points with a $(\dim X) d(P)$ term in the inequality \eqref{Vineq} (instead of $d(P)$).  More precisely, our examples allow the possibility that Conjecture \ref{Vgeneral} could hold for all algebraic points when the $d(P)$ term is replaced by $\frac{3}{2}d(P)$ in the inequality \eqref{Vineq}.  At the end of Section \ref{sell} we will also discuss some recent examples \cite{ACG} of Autissier, Chambert-Loir, and Gasbarri concerning a geometric analogue of \eqref{Vineq}, which may provide further insight into the correct form of a uniform version of Vojta's Conjecture.

\subsection*{Acknowledgments}
I thank Antoine Chambert-Loir for many fruitful discussions and for introducing me to and explaining to me his joint paper \cite{ACG} with Autissier and Gasbarri (which was an inspiration for the present paper).  I thank William Cherry for helpful comments on an earlier draft, and in particular, for bringing to my attention the paper \cite{Tij}.  I thank Tom Tucker and Paul Vojta for further useful discussions.  I thank the Institute for Advanced Study for providing a nice research environment in which some of this research was done, supported by NSF grant DMS-0635607.

\section{The Dependence of the Exceptional Set on $A$}
\label{depA}
We now show that the exceptional set must be allowed to depend on the divisor $A$.
\begin{theorem}
The exceptional set $Z=Z(r,\epsilon,X,D,A)$ cannot be chosen independently of $A$, even for $r=1$, $D=0$, and varying only over ample divisors $A$.
\end{theorem}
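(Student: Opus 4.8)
The plan is to produce a single surface $X$ over $k$ for which, with $D=0$ and $r=1$, no proper Zariski-closed $Z\subsetneq X$ can serve as the exceptional set in \eqref{Vineq} for every ample $A$; as $X$ will carry infinitely many $k$-rational curves that must each be absorbed into exceptional sets for suitable $A$, this forces $Z(1,\epsilon,X,0,A)$ to depend on $A$. I take $X$ to be a relatively minimal elliptic surface $\pi\colon X\to\mathbb{P}^1$ with a section defined over $k$, arranged so that: (i) the Mordell--Weil group of $\pi$ over $k$ is infinite (e.g.\ of positive rank), so that $\pi$ has infinitely many sections defined over $k$; (ii) every fiber of $\pi$ is irreducible; and (iii) $\chi(\mathcal{O}_X)\ge 3$. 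Such surfaces arise from a generic Weierstrass equation of sufficiently large degree over $\mathbb{P}^1$ carrying a section of infinite order: genericity makes the discriminant have simple zeros, so all fibers are irreducible. Since $\pi$ has a section there are no multiple fibers, so the canonical bundle formula gives $K_X=\pi^*M$ for a divisor $M$ on $\mathbb{P}^1$ of degree $\kappa:=\chi(\mathcal{O}_X)-2\ge 1$. Fix any $\epsilon$ with $0<\epsilon<\kappa$.

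The crucial geometric input is that each section of $\pi$ has ample-degree $1$ against a suitable ample divisor. Given a section $P$, with $F$ the fiber class one has $P^2=-\chi(\mathcal{O}_X)$, $P\cdot F=1$, $F^2=0$; set $A_P:=P+(\chi(\mathcal{O}_X)+1)F$. Then $A_P\cdot P=1$, $A_P^2=\chi(\mathcal{O}_X)+2>0$, $A_P\cdot F=1>0$, and $A_P\cdot C>0$ for every irreducible curve $C\ne P$: if $C$ lies in a fiber then by (ii) it is an entire irreducible fiber, so $A_P\cdot C=A_P\cdot F=1$; if $C$ is horizontal then $A_P\cdot C=P\cdot C+(\chi(\mathcal{O}_X)+1)\deg(\pi|_C)>0$ since $P\cdot C\ge 0$. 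By the Nakai--Moishezon criterion $A_P$ is ample. Now restrict heights to the rational curve $P\cong\mathbb{P}^1$: there $\pi|_P$ is an isomorphism, $A_P|_P$ has degree $1$, and $K_X|_P=(\pi|_P)^*M$ has degree $\kappa$, so functoriality of Weil heights gives $h_K(Q)=\kappa\,h(\pi(Q))+O(1)$ and $h_{A_P}(Q)=h(\pi(Q))+O(1)$ for $Q\in P(k)$, where $h$ is a Weil height on $\mathbb{P}^1$ associated to $\mathcal{O}_{\mathbb{P}^1}(1)$. Hence $h_K(Q)-\epsilon h_{A_P}(Q)=(\kappa-\epsilon)\,h(\pi(Q))+O(1)$, which is unbounded above along $P(k)\cong\mathbb{P}^1(k)$.

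To conclude, let $Z\subsetneq X$ be any proper Zariski-closed subset. It contains only finitely many irreducible curves, so by (i) there is a section $P$ with $P\not\subseteq Z$; then $P(k)\setminus Z$ is infinite and still has unbounded image under $\pi$. Taking $A=A_P$, and noting that with $D=0$ the proximity term $m_{D,S}$ vanishes while $d(Q)$ is constant on $X(k)$, the estimate above shows $m_{D,S}(Q)+h_K(Q)-d(Q)-\epsilon h_A(Q)\to+\infty$ along $P(k)\setminus Z$, so \eqref{Vineq} cannot hold with this $A$ and exceptional set $Z$ for any value of the $O(1)$. Thus no proper Zariski-closed set works simultaneously for all ample $A$; equivalently (granting Conjecture \ref{Vgeneral} for $X$, which is the only case in which the assertion has content), for each of the infinitely many sections $P$ the set $Z(1,\epsilon,X,0,A_P)$ must contain $P$, and these sets cannot all equal one proper Zariski-closed subset.

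The step I expect to require the most care is securing hypotheses (ii) and (iii) \emph{together} with an infinite Mordell--Weil group over $k$, and then the Nakai--Moishezon verification that $A_P$ is ample with $A_P\cdot P=1$: the irreducible-fiber hypothesis is exactly what makes $A_P$ positive on every curve, while $\chi(\mathcal{O}_X)\ge 3$ is what makes $K_X$ the pullback of a positive-degree divisor, so that $h_K$ grows like a height on $\mathbb{P}^1$ along each section. This also explains the connection with the dependence on $\epsilon$ noted earlier: on the curve $P$, replacing a fixed ample $A_0$ by $A_P$ rescales the effective value of $\epsilon$ by $(A_P\cdot P)/(A_0\cdot P)\le 1/(A_0\cdot P)$, and $A_0\cdot P\to\infty$ as $P$ ranges over the sections.
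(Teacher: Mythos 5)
Your argument is correct in outline but takes a genuinely different route from the paper. The paper starts from a surface $X$ (citing \cite[Ex.~2.4]{LMW}) carrying infinitely many rational curves $C\cong\mathbb{P}^1_k$ with $K_X.C>0$, and for each such $C$ manufactures an ample $A$ with $A.C\le 2$ by a $\gcd$ trick: using adjunction to see $C.C<-2$ and $\gcd(A'.C,C.C)\le 2$ for a suitable ample $A'$, it takes a positive combination $A=iA'+jC$ and verifies ampleness via Nakai--Moishezon. Fixing $\epsilon<1/2$, the inequality $K_X.C-\epsilon A.C\ge 1-2\epsilon>0$ forces $C\subset Z(1,\epsilon,X,0,A)$. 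You instead take $X$ to be a relatively minimal elliptic fibration with infinitely many sections over $k$, all fibers irreducible, and $\chi(\mathcal{O}_X)\ge 3$; the sections play the role of the rational curves (indeed they satisfy $K_X.P=\chi-2>0$, so your $X$ is a special instance of the kind of surface the paper invokes), and you build the ample divisor explicitly as $A_P=P+(\chi+1)F$ with $A_P.P=1$, which is arguably cleaner than the $\gcd$ step at the cost of needing irreducible fibers for the Nakai--Moishezon check.

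The one place where your write-up is genuinely loose is the existence of the elliptic surface: a truly generic Weierstrass fibration with $\chi\ge 2$ has Mordell--Weil rank $0$ over $\overline{\mathbb{Q}}$ by Shioda--Tate (all $12\chi$ fibers are $I_1$, so $\rho=2$ and there is no room for sections beyond the zero section), so ``generic equation carrying a section of infinite order'' is not a construction, and it is not obvious that imposing a section of infinite order is compatible with keeping the discriminant squarefree. This is fixable: start from a rational elliptic surface over $\mathbb{Q}$ with twelve $I_1$ fibers and positive Mordell--Weil rank, and base-change along a degree-$n$ cover $\mathbb{P}^1\to\mathbb{P}^1$ ($n\ge 3$) unramified over the twelve singular values; the result has $\chi=n\ge 3$, only $I_1$ fibers, and inherits a section of infinite order. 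You should either supply such a construction or cite one, since as written the existence claim is the weakest link; everything downstream (the formula $P^2=-\chi$, the ampleness of $A_P$, the height comparison $h_K-\epsilon h_{A_P}=(\kappa-\epsilon)h\circ\pi+O(1)$, and the conclusion that any proper closed $Z$ misses some section $P$ and thus fails for $A=A_P$) is sound.
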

As mentioned earlier, this theorem is very closely related to the fact that there are examples where the exceptional set $Z(r,\epsilon, X, D, A)$ must depend on $\epsilon$.
\begin{proof}
Let $X$ be a nonsingular projective surface over a number field $k$ with infinitely many distinct curves $C\cong \mathbb{P}^1_k$ having intersection number $K_X.C>0$.  An example of such a surface was explicitly given in \cite[Ex. 2.4]{LMW}.  Let $0<\epsilon<\frac{1}{2}$.  It suffices to show that for any such curve $C$, there exists an ample divisor $A$ with $C\subset Z(r=1,\epsilon, X,D=0,A)$.  Let $C$ be such a curve.  We first claim that there exists an ample divisor $A'$ on $X$ satisfying $\gcd(A'.C,C.C)\leq 2$.  Note that since $C\cong \mathbb{P}^1$, by the adjunction formula, $K_X.C=-2-C.C$ and $C.C<0$.  It follows that for any ample divisor $A''$ on $X$, either $A'=A''$ or $A'=nA''+K_X$, for some sufficiently large integer $n$, will satisfy $A'$ is ample and $\gcd(A'.C,C.C)\leq 2$.  Since $A'.C>0$ and $C.C<0$, we may choose positive integers $i$ and $j$ such that $i(A'.C)+j(C.C)=\gcd(A'.C,C.C)\leq 2$.  Let $A=iA'+jC$.  We claim that $A$ is ample.  By construction, we have $A.C>0$.  For any curve $C'\neq C$, $C.C'\geq 0$ and $A'.C'>0$, so $A.C'>0$.  Moreover, since $A'.A>0$ and $C.A>0$, we have $A^2>0$.  It follows from the Nakai-Moishezon criterion that $A$ is ample.  Since $K_X.C-\epsilon A.C\geq 1-2\epsilon>0$, infinitely many points $P$ in $C(k)$ will satisfy $h_K(P)>\epsilon h_A(P)$.  So we must have $C\subset Z(r=1,\epsilon, X,D=0,A)$, as was to be shown.
\end{proof}

\section{Elliptic Surfaces and the Dependence of the Exceptional Set on $r$}
\label{sell}
In this section we will study certain instances of Vojta's Conjecture on elliptic surfaces.  We assume throughout that our (nonsingular, projective) elliptic surfaces are nonconstant and relatively minimal (i.e., no fiber contains an exceptional curve of the first kind).  Recall that such an elliptic surface is called {\it semistable} if every singular fiber has Kodaira type $I_n$ for some $n\geq 1$.  In particular, on a semistable elliptic surface $X$, every fiber is a normal crossings divisor.

We will find it convenient to consider a modified version of Vojta's Conjecture where the coefficient of $d(P)$ is a function of the dimension of $X$.  Let $f:\mathbb{N}\to \mathbb{R}$ be a fixed positive real-valued function.
\begin{conjecture}[Modified Vojta Conjecture with function $f(n)$]
\label{Vgeneral2}
Let $X$ be a nonsingular complete variety of dimension $n$ with canonical divisor $K$.  Let $D$ be a normal crossings divisor on $X$, and let $k$ be a number field over which $X$ and $D$ are defined.  Let $S$ be a finite set of places of $k$, $A$ a big divisor on $X$, $\epsilon>0$, and $r$ a positive integer.  Then there exists a proper Zariski-closed subset $Z=Z(f,r,\epsilon,X,D,A)$ of $X$ such that
\begin{equation}
\label{Vineq2}
m_{D,S}(P)+h_K(P)\leq f(n)d(P)+\epsilon h_A(P)+O(1)
\end{equation}
for all points $P\in X(\kbar)\setminus Z$ with $[k(P):k]\leq r$.
\end{conjecture}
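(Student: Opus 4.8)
Conjecture~\ref{Vgeneral2} is a one-parameter family of variants of Vojta's Conjecture indexed by the function $f$, so the task is not to prove it from nothing but to identify in which regimes it follows from Conjecture~\ref{Vgeneral} (or from its known instances) and where genuinely new input would be needed. The first point is that the discriminant term is nonnegative: since $|D_{k(P)/\mathbb{Q}}|\geq 1$ one has $d(P)\geq 0$ for every $P$. Hence if the fixed function satisfies $f(n)\geq 1$ for all $n$, one may simply take $Z(f,r,\epsilon,X,D,A)=Z(r,\epsilon,X,D,A)$ from Conjecture~\ref{Vgeneral}, and \eqref{Vineq2} is immediate from \eqref{Vineq} because $f(n)d(P)\geq d(P)$. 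In particular, for $f\equiv 1$ Conjecture~\ref{Vgeneral2} \emph{is} Conjecture~\ref{Vgeneral}, while for $f(n)=\dim X$ or $f(n)=\tfrac{3}{2}$ it is formally weaker; so in this range my plan would be nothing more than ``assume Conjecture~\ref{Vgeneral} and transport the exceptional set unchanged''.

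The content therefore lies in the complementary regime, where $f(n)<1$ for some $n$ and \eqref{Vineq2} is strictly stronger than \eqref{Vineq}. The plan there is to revisit the cases in which Vojta's Conjecture is actually a theorem and see whether the existing proofs already deliver a coefficient below $1$ in front of $d(P)$. The model case is $X=\mathbb{P}^n$ with $D$ a sum of hyperplanes in general position, where the input is Schmidt's Subspace Theorem together with its extension to points of bounded degree --- for $n=1$ this is Wirsing-type approximation to algebraic numbers by algebraic numbers of bounded degree, and for $n>1$ the subspace theorem over number fields coupled with a geometry-of-numbers estimate for the relative different. One would track the exponents there, decide whether some $f(n)<1$ is admissible for that $(X,D)$, and then try to propagate such statements along dominant morphisms and finite covers, as one bootstraps the original conjecture. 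The known function field analogue for curves with $r=\infty$ (McQuillan, Yamanoi) carries the coefficient $1$ and so is a guide only to the $f\equiv 1$ case, not to $f<1$.

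The main obstacle is structural, not computational: Conjecture~\ref{Vgeneral} is wide open in dimension $\geq 2$ --- it implies the $abc$ and Bombieri--Lang Conjectures --- so no proof of Conjecture~\ref{Vgeneral2} is available in any regime not already covered by Conjecture~\ref{Vgeneral}. Moreover, the examples in this paper counsel against seeking $f<1$ at all: Theorem~\ref{tsem} and Corollary~\ref{calg} show that the coefficient $1$ cannot survive passage to $r=\infty$, and the discussion following Corollary~\ref{calg} records that those same elliptic-surface examples are consistent with replacing $d(P)$ by $\tfrac{3}{2}d(P)$ but not with leaving it as $d(P)$ once $r$ is unbounded. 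So the realistic target is not small $f$ but (i) new special cases of the $f\equiv 1$ statement, and (ii) the converse investigation carried out in the rest of Section~\ref{sell}: pinning down the least $f$ for which an $r$-uniform form of \eqref{Vineq2} is consistent with the known examples, where $\tfrac{3}{2}$ is currently the frontier.
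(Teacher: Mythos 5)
The statement you were given is a conjecture, not a theorem: the paper states Conjecture~\ref{Vgeneral2} as a working hypothesis (a family of modified Vojta conjectures indexed by $f$) and offers no proof of it; its actual results run in the opposite direction, using elliptic surfaces to prove \emph{lower bounds} on any admissible $f$ (Theorem~\ref{main}, Corollary~\ref{mainc}). Your proposal correctly recognizes this, and the one piece of genuine mathematical content you supply --- that $d(P)\geq 0$ because $|D_{k(P)/\mathbb{Q}}|\geq 1$, so for $f(n)\geq 1$ the modified conjecture follows from Conjecture~\ref{Vgeneral} with the same exceptional set --- is sound and consistent with how the paper implicitly treats the relation between the two conjectures. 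There is nothing further to compare, since no proof of the statement exists in the paper or, for $f<1$ in dimension $\geq 2$, anywhere else.
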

We now prove the main result of this section.
\begin{theorem}
\label{main}
Let $X$ be a semistable elliptic surface over $\mathbb{P}^1$ with a section over a number field $k$.  Let $D$ be a sum of an even number $N$ of fibers, containing the singular fibers.  If Conjecture \ref{Vgeneral2} holds for $X$, $D$, and a function $f$ satisfying $f(2)<1+\frac{\chi(\co_X)}{N-2}$, then the exceptional set $Z(f,r,\epsilon,X,D,A)$ must depend on the parameter $r$.
\end{theorem}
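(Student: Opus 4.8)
The plan is to exploit the two very different ways that points on an elliptic surface behave: rational (or bounded-degree) points, which are constrained by the Mordell--Weil theorem applied fiber by fiber, versus algebraic points of unbounded degree arising from the multiplication-by-$m$ maps on the generic fiber, which produce many torsion-like or small-height points on infinitely many curves. The key computation is the canonical bundle formula for a relatively minimal elliptic surface $\pi:X\to\mathbb{P}^1$, namely $K_X = \pi^*(K_{\mathbb{P}^1} + \mathfrak{d})$ where $\deg\mathfrak{d} = \chi(\co_X)$, so that $K_X$ is numerically $(\chi(\co_X)-2)F$ for a fiber $F$. Since $D$ is a sum of $N$ fibers, $K_X + D$ is numerically $(N - 2 + \chi(\co_X))F$. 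First I would compute, for a multisection $C\subset X$ of the fibration of degree $d$ over $\mathbb{P}^1$, the quantities $h_{K_X}$, $h_D$ (hence $m_{D,S}$ via $h_D = m_{D,S} + N_{D,S}$), and $d(P)$ restricted to $C$, expressing everything in terms of the degree of the map $C\to\mathbb{P}^1$ and the height on $\mathbb{P}^1$.

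Next I would produce the curves that witness the $r$-dependence. The natural source is the multiplication-by-$m$ isogeny: composing a section with $[m]$ on fibers, or more precisely taking the curve $C_m\subset X$ that is the image of the $m$-division points of a fixed section (or the curves obtained from a base-change trivializing $m$-torsion). Such a curve $C_m$ is a multisection of degree growing with $m$, it lies over $\mathbb{P}^1$, and on it one finds infinitely many algebraic points $P$ of bounded degree \emph{relative to their arithmetic}: the ramification of $C_m\to\mathbb{P}^1$ over the branch points (which sit inside $\Supp D$, by semistability, over the singular fibers) forces the discriminant term $d(P)$ to be small compared to $h_{K}(P)+m_{D,S}(P)$, precisely because $D$ absorbs the places of bad reduction. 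Concretely, I would show that on $C_m$ one gets an inequality of the shape $m_{D,S}(P) + h_K(P) \geq \bigl(1 + \tfrac{\chi(\co_X)}{N-2} - o(1)\bigr) d(P) + O(1)$ for infinitely many such $P$ as $m\to\infty$ and along each $C_m$, so that under the hypothesis $f(2) < 1 + \frac{\chi(\co_X)}{N-2}$ the inequality \eqref{Vineq2} fails on $C_m$ for all $P$ of large height. Hence each $C_m$ must be contained in the exceptional set once $r \geq \deg(C_m\to\mathbb{P}^1)$, while for small $r$ these curves cannot all be forced into $Z$ — in particular one needs a complementary argument that for fixed $r$ the union of the $C_m$ contained in any proper closed $Z$ is finite, which follows because a proper Zariski-closed subset of a surface contains only finitely many curves.

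The step I expect to be the main obstacle is the quantitative control of the discriminant $d(P)$ along the curves $C_m$: one must show that the ramification of $C_m\to\mathbb{P}^1$ is concentrated (up to a controlled amount) over the support of $D$, so that the ``new'' ramification contributing to $d(P)$ beyond what $N_{D,S}(P)$ already accounts for is of lower order. This is where semistability is essential — the $I_n$ fibers give toric, tamely ramified local behavior for the $m$-division curves — and where the precise constant $1 + \frac{\chi(\co_X)}{N-2}$ is pinned down, via comparing $\deg K_{C_m}$ (Riemann--Hurwitz) against $\deg(C_m\to\mathbb{P}^1)\cdot\deg(K_{\mathbb{P}^1}+\mathfrak{d})$ and the contribution of $D$. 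Once that ramification accounting is in place, combining it with the height machinery on $\mathbb{P}^1$ (Northcott, plus the fact that $\mathbb{P}^1$ has infinitely many algebraic points of each bounded degree with height tending to infinity) yields the infinitely many offending points $P$ on each $C_m$, completing the argument that $Z$ cannot be taken independent of $r$.
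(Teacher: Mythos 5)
Your high-level strategy matches the paper's: use the multiplication-by-$m$ maps to produce an infinite family of multisections $C$, show that on each such curve one can make $m_{D,S}(P)+h_K(P)$ beat $\bigl(1+\frac{\chi(\co_X)}{N-2}\bigr)d(P)+\epsilon h_A(P)$ for an infinite set of algebraic points, and conclude that each $C$ must live in the exceptional set for its own degree bound $r(C)\to\infty$, so $Z$ cannot be $r$-independent. You also correctly identify the canonical bundle formula $K_X\sim(\chi(\co_X)-2)F$ as the source of the numerator and the role of the Chevalley-Weil phenomenon (ramification concentrated over $D$, so the discriminant does not grow under pullback by $[m]$).

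However, there is a genuine gap at exactly the step you flag as the main obstacle. You propose to control $d(P)$ by a Riemann--Hurwitz computation on $C_m\to\mathbb{P}^1$ and then supply points via ``Northcott, plus the fact that $\mathbb{P}^1$ has infinitely many algebraic points of each bounded degree with height tending to infinity.'' That does not give the required inequality, for two reasons. First, generic bounded-degree points on $\mathbb{P}^1$ (or on $C_m$) are not $D$-integral, so one cannot replace $m_{D,S}(P)$ by $h_D(P)$; without integrality the proximity term can be arbitrarily small, and the whole inequality collapses. Second, and more importantly, the denominator $N-2$ in the target constant does not come from a geometric Riemann--Hurwitz computation on $C_m$ at all: it comes from Mahler's arithmetic bound $d(P)\leq(2r-2)h(P)+O(1)$ on $\mathbb{P}^1$ applied with $r=N/2$. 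That degree bound $N/2$ is achieved only by a specific construction: one splits $\sigma\cap D$ into two halves $R_1, R_2$ of size $N/2$, takes a rational function $\psi$ with divisor $R_1-R_2$ giving a degree-$N/2$ map $\sigma\setminus D\to\mathbb{G}_m$, and pulls back the infinite set of $S$-units to get an infinite set $T$ of $D$-integral points on $\sigma\setminus D$ of degree $\leq N/2$ over $k$. These are then transported to $C$ by $[m]^{-1}$, where Chevalley-Weil (applicable precisely because the points are $D$-integral and $[m]$ is unramified off $D|_C$) preserves the discriminant bound. Your proposal has no source for the $D$-integral points of degree exactly $\leq N/2$, and hence no way to pin down the $N-2$ denominator; a genus/ramification count of $C_m$ itself measures the wrong thing.
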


\begin{proof}
By enlarging $k$, we may assume that every fiber in $D$ is defined over $k$ and that $\Ok^*$ is infinite.  Let $\pi:X\to \mathbb{P}^1$ be the fibration and let $\sigma:\mathbb{P}^1 \to X$ be a section over $k$ (which we also identify with its image).  We first claim that $\sigma\setminus D\cong \mathbb{P}^1\setminus \{N \text{ points}\}$ contains an infinite set of integral points $T$ with $[k(P):k]\leq \frac{N}{2}$ for all $P\in T$.  Let $R=\sigma \cap D=R_1\cup R_2$, where $R_1$ and $R_2$ are disjoint sets each with $\frac{N}{2}$ elements of $R$.  Let $\psi$ be a rational function over $k$ on $\mathbb{P}^1$ having zeros exactly at points in $R_1$ and poles exactly at points in $R_2$.  Then $\psi$ defines a morphism $\psi:\sigma\setminus D\to \mathbb{G}_m$ of degree $\frac{N}{2}$.  Since $\mathbb{G}_m$ contains an infinite set $T'$ of integral points over $k$ ($\Ok^*$ was assumed infinite), the set $T=\psi^{-1}(T')$ gives an infinite set of integral points on $\sigma\setminus D$ with $[k(P):k]\leq \frac{N}{2}$ for all $P\in T$.

Taking $\sigma$ to be the zero section on $X$, we have a rational map $[m]:X\to X$ induced by the fiberwise multiplication-by-$m$ map on the nonsingular fibers of $X$.  Let $C\neq \sigma$ be an irreducible component of the Zariski-closure of $[m]^{-1}(\sigma\setminus D)$.  Let $L\supset k$ be a number field that is a field of definition for $C$.  Then $[m]$ induces a morphism (over $L$) $\phi:C\to \sigma$.  Let $\tilde{T}=\phi^{-1}(T)$.  Let $A=\sigma+F$ where $F$ is some fiber.  It is easy to see that $A$ is a big divisor on $X$.  Let $S$ be the set of archimedean places of $k$.  We claim that for any $\epsilon>0$ and any constant $c$, all but finitely many points $P$ in $\tilde{T}$ satisfy
\begin{equation}
\label{fundineq}
m_{D,S}(P)+h_K(P)> \left(1+\frac{\chi(\co_X)-2\epsilon}{N-2}\right)d(P)+\epsilon h_A(P)+c.
\end{equation}
Let us first show that this claim implies the theorem.  Every point $P$ in $\tilde{T}$ satisfies $[k(P):k]\leq \frac{N}{2}[L:k]\deg \phi$.  Since $f(2)<1+\frac{\chi(\co_X)}{N-2}$, it follows from \eqref{fundineq} that for sufficiently small $\epsilon$ (depending only on $f(2)$), $C\subset Z(f,r,\epsilon,X,D,A)$, where $r=r(C)=\frac{N}{2}[L:k]\deg \phi$.  As we vary the integer $m$, we obtain infinitely many distinct such curves $C$ (with different values $r=r(C)$), obtaining the desired result.

We now show \eqref{fundineq}.  First, since $\tilde{T}=\phi^{-1}(T)$ and $T$ is a set of integral points on $\sigma\setminus D$, it follows that $\tilde{T}$ is a set of integral points on $C\setminus D$.  Thus,
\begin{equation*}
m_{D,S}(P)=h_D(P)+O(1), \quad \forall P\in \tilde{T}.
\end{equation*}
The canonical divisor is given by $K_X=(\chi(\co_X)-2)F$ \cite[Th. 12]{Kod}, where $F$ is a fiber.  We have $h_A=h_\sigma+h_F+O(1)$.  It is known that $\sigma\cap C=\emptyset$ (see, e.g., \cite[p. 72, Prop.\ (VII.3.2)]{Mir}).  It follows that $h_\sigma$ is bounded on $C$.  So
\begin{equation*}
h_A=h_F+O(1), \quad \forall P\in C.
\end{equation*}
Since $D\sim NF$, it follows that for all $P$ in $\tilde{T}$ we have
\begin{equation*}
m_{D,S}(P)+h_K(P)-\epsilon h_A(P)=(N+\chi(\co_X)-2-\epsilon)h_F(P)+O(1).
\end{equation*}
Note that $F|_{C}$ is ample, and so for any infinite set of points of bounded degree on $C$ we have $h_F(P)\to \infty$.  Thus, for any constant $c$, all but finitely many points $P$ in $\tilde{T}$ satisfy
\begin{equation*}
m_{D,S}(P)+h_K(P)-\epsilon h_A(P)>(N+\chi(\co_X)-2-2\epsilon)h_F(P)+c.
\end{equation*}
Comparing with \eqref{fundineq}, to finish the proof it suffices to show that
\begin{equation}
\label{ineq1}
d(P)\leq (N-2)h_F(P)+O(1), \quad \forall P\in \tilde{T}.
\end{equation}
This will follow from the following two results.
\begin{theorem}[Mahler \cite{Mah}]
\label{dineq1}
The inequality
\begin{equation*}
d(P)\leq (2r-2)h(P)+O(1)
\end{equation*}
holds for all $P\in \mathbb{P}^1(\kbar)$ with $[k(P):k]\leq r$.
\end{theorem}
\begin{theorem}[Chevalley-Weil]
\label{dineq2}
Let $\phi:X\to Y$ be a finite morphism of nonsingular projective varieties, with ramification divisor $R$ on $X$.  Let $T$ be a set of integral points of bounded degree on $X\setminus R$.  Then
\begin{equation*}
d(P)\leq d(\phi(P))+O(1)
\end{equation*}
for all $P$ in $T$.
\end{theorem}
For a more general quantitative version of the Chevalley-Weil theorem, see \cite[Th. 5.1.6]{Vo}.

Since $[k(P):k]\leq \frac{N}{2}$ for points $P$ in $T$, by Theorem \ref{dineq1} we have
\begin{equation}
\label{ineq2}
d(P)\leq (N-2)h(P)+O(1), \quad \forall P\in T.
\end{equation}
The map $\phi:C\to \sigma$ is unramified outside of $D|_C$.  So by Theorem \ref{dineq2}
\begin{equation}
\label{ineq3}
d(Q)\leq d(\phi(Q))+O(1), \quad \forall Q\in \tilde{T}.
\end{equation}
By functoriality, $h_F(P)=h(\pi(P))+O(1)$ for all $P\in X(\kbar)$.  So
\begin{equation}
\label{ineq4}
h_F(Q)=h_F(\phi(Q))+O(1)=h(\phi(Q))+O(1), \quad \forall Q\in C(\kbar).
\end{equation}
Then combining \eqref{ineq2}, \eqref{ineq3}, and \eqref{ineq4}, we obtain \eqref{ineq1} as desired.
\end{proof}

To optimally apply Theorem \ref{main}, we want to minimize the number of singular fibers on a semistable elliptic surface relative to $\chi(\co_X)$.  This problem was studied by Shioda in \cite{Shi}.

\begin{theorem}[Shioda]
\label{Sh}
Let $X$ be a semistable elliptic surface over $\mathbb{P}^1$ with a section.  Let $N$ be the number of singular fibers.  Then $N\geq 2\chi(\co_X)+2$.  Furthermore, for any positive integer $\chi$, there exists such an $X$ with a section over a number field $k$, $\chi(\co_X)=\chi$, and exactly $N=2\chi+2$ singular fibers.
\end{theorem}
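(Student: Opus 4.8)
The plan is to pass from $X$ to its $j$-map $j\colon\mathbb{P}^1\to\mathbb{P}^1$ and to extract the inequality from the Riemann--Hurwitz formula together with the divisibility constraints that semistability forces on the ramification of $j$. First I would assemble two facts from Kodaira's classification of degenerate fibers. Since a semistable surface carries only smooth fibers and fibers of type $I_n$ ($n\ge 1$), and $I_n$ is exactly the type with multiplicative reduction and $\ord(\Delta)=n$, the function $j=c_4^3/\Delta$ (minimal Weierstrass model) has a pole of order precisely $n_i$ at each singular fiber $F_i\cong I_{n_i}$ and is finite elsewhere; hence $\deg j=\sum_i n_i$. Combining this with $e(X)=\sum_i e(I_{n_i})=\sum_i n_i$ (smooth fibers have vanishing Euler characteristic) and Noether's formula $e(X)=12\chi(\co_X)$ (using $K_X^2=0$) gives $\deg j=12\chi(\co_X)$. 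The second and decisive fact: at a point $t_0$ over $j=0$ the fiber is smooth — it cannot be $I_n$ since $I_n$ has $j=\infty$ — so $\Delta(t_0)\ne 0$ and $\ord_{t_0}(j)=3\,\ord_{t_0}(c_4)$; thus every ramification index of $j$ over $0$ is a multiple of $3$, and symmetrically, using $j-1728=c_6^2/\Delta$, every ramification index over $1728$ is a multiple of $2$. (Here $j$ is nonconstant, since under the standing hypotheses $X$ has a singular fiber, equivalently $\chi(\co_X)\ge 1$.)

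Next I would run the Riemann--Hurwitz count. With $d=\deg j=12\chi(\co_X)$,
\[
\sum_{Q\in\mathbb{P}^1}\bigl(e_Q(j)-1\bigr)=2d-2 .
\]
Over $j=\infty$ the contribution is $\sum_i(n_i-1)=d-N$. If $a$ and $b$ denote the number of points of $\mathbb{P}^1$ over $j=0$ and over $j=1728$, then since the ramification indices over $0$ are positive multiples of $3$ with sum $d$, we get $a\le d/3$ and contribution $d-a$, and likewise $b\le d/2$ with contribution $d-b$. Discarding the nonnegative ramification over all remaining points,
\[
(d-N)+(d-a)+(d-b)\le 2d-2,
\]
hence $N\ge d+2-a-b\ge d+2-\dfrac{d}{3}-\dfrac{d}{2}=\dfrac{d}{6}+2=2\chi(\co_X)+2$. (Note that equality throughout forces $a=d/3$, $b=d/2$, and no ramification of $j$ outside $\{0,1728,\infty\}$; this already explains the shape of the extremal surfaces.)

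For the sharpness statement I would construct the extremal surfaces by base change from the first nontrivial case. There is a semistable rational elliptic surface over $\mathbb{P}^1$ with a section and exactly four singular fibers, defined over a number field — for instance the surface arising from the Hesse pencil of plane cubics, whose four singular fibers are of type $I_3$, or one of Beauville's surfaces. Call it $X_1\to\mathbb{P}^1$, over a number field $k$, with singular fibers over $P_1,\dots,P_4\in\mathbb{P}^1(\kbar)$ of types $I_{n_1},\dots,I_{n_4}$, so $\sum_i n_i=12$ and $\chi(\co_{X_1})=1$. For each integer $m\ge 1$ I would pick $g_m\colon\mathbb{P}^1\to\mathbb{P}^1$ of degree $m$ totally ramified over $P_1$ and over $P_2$ and unramified everywhere else (for example $z\mapsto z^m$ conjugated by M\"obius transformations so that $0,\infty\mapsto P_1,P_2$), which is defined over a number field, and let $X_m$ be the relatively minimal model of the base change $X_1\times_{\mathbb{P}^1}\mathbb{P}^1$ along $g_m$ — again an elliptic surface with a section over a number field. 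Base change preserves semistability (an $I_n$ fiber pulled back through ramification of order $e$ becomes $I_{ne}$, and a smooth fiber stays smooth), and, summing Euler characteristics of fibers,
\[
e(X_m)=\sum_{i=1}^{4}\ \sum_{Q\in g_m^{-1}(P_i)} n_i\,e_Q(g_m)=\sum_{i=1}^{4} n_i\deg g_m=12m,
\]
so $\chi(\co_{X_m})=m$. Its singular fibers lie over $g_m^{-1}(\{P_1,P_2,P_3,P_4\})$, a set of $1+1+m+m=2m+2$ points; hence $X_m$ has exactly $2m+2=2\chi(\co_{X_m})+2$ singular fibers. Letting $\chi=m$ run over the positive integers completes the proof.

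The crux — and the step I expect to require the most care — is the first paragraph: isolating the two arithmetic constraints on $j$ (pole orders equal to the $n_i$, and ramification over $0$ and $1728$ divisible by $3$ and $2$), since this is precisely where the semistability hypothesis is used and it rests on Kodaira's table together with properties of the minimal Weierstrass model. Once these are secured, the Riemann--Hurwitz count is immediate, and in the construction the only delicate point is the routine verification that passing to a relatively minimal model after base change changes fiber types only by the expected $I_n\mapsto I_{ne}$, so that the number of singular fibers and the value of $\chi(\co_{X_m})$ come out exactly as claimed.
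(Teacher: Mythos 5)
The paper offers no proof of this statement; it simply cites Shioda and remarks that the construction in the sharpness part is ``related to so-called Davenport--Stothers triples and to polynomials which achieve equality in the polynomial $abc$-theorem.'' Your argument is correct and self-contained. The Riemann--Hurwitz count on the $j$-map is the geometric avatar of the polynomial $abc$ (Mason--Stothers) argument to which the paper alludes: both exploit the fact that, in a minimal Weierstrass model of a semistable fibration, $c_4$ and $\Delta$ are coprime (an $I_n$ fiber has $v(c_4)=0$), so ramification of $j=c_4^3/\Delta$ over $0$, $1728$, and $\infty$ is forced to be divisible by $3$, $2$, and arbitrary respectively, and the quantity $d/6+2$ drops out of Riemann--Hurwitz exactly as the bound $\deg\Delta_{\mathrm{rad}}\ge \chi+1$ drops out of Mason--Stothers; your identification $d=\deg j=e(X)=12\chi(\mathcal O_X)$ via Noether's formula with $K_X^2=0$ is correct. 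For sharpness you deviate from Shioda's route (explicit Davenport--Stothers polynomial pairs) by pulling back one of Beauville's rational surfaces with four $I_3$ fibers through a degree-$m$ cover totally ramified over two of them; this is cleaner, and your bookkeeping ($I_n\mapsto I_{ne}$ under base change preserves minimality since $v(c_4)$ stays $0$, giving $e(X_m)=12m$ and $2m+2$ singular fibers) is accurate. One small point worth saying explicitly: you use the standing hypothesis that the surface is nonconstant (so $\chi\ge1$ and $j$ is nonconstant), which you flag; note that this is genuinely needed, since a constant-$j$ semistable surface has $N=0$ and $\chi=0$, and the inequality would then fail.
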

The construction of semistable elliptic surfaces with a minimal number of singular fibers is related to so-called Davenport-Stothers triples and to polynomials which achieve equality in the polynomial $abc$-theorem.  As a consequence of Shioda's result, we obtain:

\begin{corollary}
\label{mainc}
If Conjecture \ref{Vgeneral2} holds for $r=\infty$ then we must have $f(2)\geq \frac{3}{2}$.
\end{corollary}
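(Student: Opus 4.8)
The plan is to combine Theorem \ref{main} with the extremal semistable elliptic surfaces produced by Shioda's Theorem \ref{Sh}. Suppose, for contradiction, that Conjecture \ref{Vgeneral2} holds for $r=\infty$ with some function $f$ satisfying $f(2)<\frac{3}{2}$. Holding for $r=\infty$ means that a single exceptional set $Z=Z(f,\epsilon,X,D,A)$ suffices for \emph{all} algebraic points $P$ simultaneously; in particular, restricting attention to points with $[k(P):k]\le r$ shows that for every finite $r$ the exceptional set may be taken to be this same $Z$, i.e., independent of $r$. (Note that the proof of Theorem \ref{main} establishes \eqref{fundineq} for \emph{any} constant $c$, so this remains true even if the $O(1)$ in \eqref{Vineq2} is allowed to depend on $[k(P):k]$.) Thus it suffices to exhibit one semistable elliptic surface $X$ over $\mathbb{P}^1$ with a section over a number field, and one divisor $D$ as in Theorem \ref{main}, for which the hypothesis $f(2)<1+\frac{\chi(\co_X)}{N-2}$ is met: Theorem \ref{main} then forces $Z(f,r,\epsilon,X,D,A)$ to depend on $r$, a contradiction.

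Concretely, I would fix any positive integer $\chi$ and invoke Shioda's Theorem \ref{Sh} to obtain a semistable elliptic surface $X$ over $\mathbb{P}^1$ with a section over a number field $k$, with $\chi(\co_X)=\chi$, and with exactly $N=2\chi+2$ singular fibers. Take $D$ to be the sum of these $N$ singular fibers. Then $N=2(\chi+1)$ is even, and $D$ trivially contains all the singular fibers, so $D$ satisfies the hypotheses of Theorem \ref{main}. Moreover
\[
1+\frac{\chi(\co_X)}{N-2}=1+\frac{\chi}{2\chi+2-2}=1+\frac{\chi}{2\chi}=\frac{3}{2}.
\]
Hence $f(2)<\frac{3}{2}=1+\frac{\chi(\co_X)}{N-2}$, Theorem \ref{main} applies, and we obtain the desired contradiction. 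Therefore $f(2)\ge\frac{3}{2}$.

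There is essentially no serious obstacle: the argument is a direct substitution of Shioda's extremal bound $N=2\chi+2$ into the threshold $1+\frac{\chi(\co_X)}{N-2}$ appearing in Theorem \ref{main}. The only points to verify are bookkeeping: that $N$ is even (it is, being $2(\chi+1)$), that $N-2>0$ so the fraction is meaningful (true since $\chi\ge 1$ forces $N\ge 4$), and that taking $D$ to consist of exactly the singular fibers is an allowed choice in Theorem \ref{main}. One should also observe that enlarging $D$ by adjoining smooth fibers only decreases $1+\frac{\chi(\co_X)}{N-2}$, so Shioda's surfaces with the minimal number of singular fibers are precisely the optimal ones to feed into Theorem \ref{main}; this is the reason the constant obtained is exactly $\frac{3}{2}$ and cannot be improved by this method.
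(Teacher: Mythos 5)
Your proposal is correct and follows precisely the argument the paper implicitly intends: invoke Shioda's Theorem \ref{Sh} to get a semistable elliptic surface with $N=2\chi+2$ singular fibers, take $D$ to be their sum, compute $1+\chi(\co_X)/(N-2)=\frac{3}{2}$, and apply Theorem \ref{main} to contradict independence from $r$. The added remarks (evenness of $N$, robustness to a degree-dependent $O(1)$, and the observation that adjoining smooth fibers only worsens the threshold) are accurate and correctly bookkept.
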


A natural question to ask is whether Theorem \ref{main} or Corollary \ref{mainc} can be improved by looking at elliptic surfaces $X$ (with a section) over a higher genus base curve $B$.  In general, for such an $X$ and a base curve $B$, we have Kodaira's formula \cite[Th. 12]{Kod}
\begin{equation*}
K_X=\pi^*(K_B+E)
\end{equation*}
where $E$ is some divisor of degree $\chi(\co_X)$ on $B$, and an inequality of Song and Tucker \cite[Eq. (2.0.3)]{ST}
\begin{equation*}
d(P)\leq (2r+\epsilon)h_{\pt}(P)+h_{K_B}(P)+O(1)
\end{equation*}
for all  $P$ in $B(\kbar)$ with $[k(P):k]\leq r$, where $\pt$ is any point of $B(k)$.  Mimicking the proof of Theorem \ref{main}, we find that the key quantity is the ratio 
\begin{equation*}
\frac{N+2g-2+\chi(\co_X)}{N+2g-2}=1+\frac{\chi(\co_X)}{N+2g-2},
\end{equation*}
where $N$ is the smallest even integer greater than or equal to the number of singular fibers of $X$.  Unfortunately, we have the general inequality
\begin{equation*}
N\geq 2\chi(\co_X)+2-2g.
\end{equation*}
Thus, it does not appear that one can improve on the constant $\frac{3}{2}$ in Corollary \ref{mainc} by using a base curve of higher genus.  More generally, in the case $X$ is not necessarily semistable, we have the inequality \cite[Cor. 2.7]{Shi2}
\begin{equation*}
\mu+2\alpha\geq 2\chi(\co_X)+2-2g,
\end{equation*}
where $\mu$ and $\alpha$ are the number of multiplicative and additive singular fibers, respectively.  This gives some hope of using additive singular fibers to improve the number $\frac{3}{2}$ in Corollary \ref{mainc}.  In this case, however, the singular fibers are no longer normal crossings divisors (as is required in Vojta's Conjecture), and this appears to prevent one, at least with our method, from improving on Corollary \ref{mainc}.

Using a trick of Vojta \cite[Prop. 5.4.1]{Vo}, we can use our results on elliptic surfaces to construct varieties $X$ such that $D=0$ and the exceptional set $Z(r,\epsilon,X,D,A)$ must depend on the parameter $r$.

\begin{theorem}
\label{D0}
There exists a nonsingular projective variety $V$ defined over a number field $k$, curves $C_i\subset V$, $i\in \mathbb{N}$, with $\cup_i C_i$ Zariski-dense in $V$, a big divisor $A$, and a function $g:\mathbb{N}\to \mathbb{N}$ such that for any $i\in \mathbb{N}$, $\delta>0$, sufficiently small $\epsilon>0$, and constant $c_i$,
\begin{equation*}
h_{K_V}(P)> \left(\frac{3}{2}-\delta\right)d(P)+\epsilon h_A(P)+c_i
\end{equation*}
for an infinite set of points $P\in C_i$ with $[k(P):k]\leq g(i)$.  In particular, for any set of constants $c_i$, $i\in \mathbb{N}$, $\delta>0$, and any sufficiently small $\epsilon>0$, the inequality
\begin{equation*}
h_{K_V}(P)> \left(\frac{3}{2}-\delta\right)d(P)+\epsilon h_A(P)+c_{[k(P):k]}
\end{equation*}
holds for a Zariski-dense set of points $P\in V(\kbar)$.
\end{theorem}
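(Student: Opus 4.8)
The plan is to deduce Theorem~\ref{D0} from the estimates proved in Theorem~\ref{main} by Vojta's ramified covering trick \cite[Prop.~5.4.1]{Vo}, which replaces a normal crossings boundary $D$ by a contribution to the canonical class of a cover. Using Shioda's Theorem~\ref{Sh}, I would first fix a semistable elliptic surface $\pi\colon X\to\mathbb{P}^1$ over a number field $k$, with section $\sigma$, having exactly $N=2\chi(\co_X)+2$ singular fibers (automatically even), and take $D$ to be the sum of these $N$ fibers, so that $1+\frac{\chi(\co_X)}{N-2}=\frac32$. The proof of Theorem~\ref{main} then supplies, for each integer $m$ and each component $C\neq\sigma$ of $\overline{[m]^{-1}(\sigma\setminus D)}$ (defined over a finite extension $L/k$), an infinite set $\widetilde T$ of $S$-integral points $P$ on $C\setminus D$ of degree $[k(P):k]\le\frac N2[L:k]\deg\phi$ with $m_{D,S}(P)+h_K(P)=3\chi(\co_X)h_F(P)+O(1)$ and, by \eqref{ineq1}, $d(P)\le(N-2)h_F(P)+O(1)=2\chi(\co_X)h_F(P)+O(1)$; and as $m$ and the components vary, these curves are Zariski-dense in $X$ (their union contains all torsion of the smooth fibers).

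Next, for each integer $e$ I would apply Vojta's construction to $(X,D)$, obtaining a nonsingular projective variety $V_e$ over some number field $k_e\supseteq k$ and a finite surjective $\rho_e\colon V_e\to X$, unramified over $X\setminus D$, with $\rho_e^*D=e\widetilde D$ for $\widetilde D=(\rho_e^*D)_{\mathrm{red}}$ (possible since $D\sim NF$ is divisible by $e$ in the Picard group when $e\mid N$), so $K_{V_e}=\rho_e^*K_X+(e-1)\widetilde D$. Each $\widetilde T$ lifts to an infinite set $\widehat T$ of points of bounded degree on a component $C'$ of $\rho_e^{-1}(C)$, and one reads off the function $g$ from the resulting degree bound (involving $m$, $[L:k]$, $e$). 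On $C'$, functoriality together with $K_X\sim(\chi(\co_X)-2)F$ and $D\sim NF$ gives
\[
h_{K_{V_e}}(P)=h_{K_X}(\rho_e(P))+\tfrac{e-1}{e}h_D(\rho_e(P))+O(1)=\left(3\chi(\co_X)-\tfrac Ne\right)h_F(\rho_e(P))+O(1);
\]
the Chevalley--Weil Theorem~\ref{dineq2} applied to $\rho_e$, combined with \eqref{ineq1}, gives $d(P)\le(N-2)h_F(\rho_e(P))+O(1)=2\chi(\co_X)h_F(\rho_e(P))+O(1)$; and for $A:=\rho_e^*(\sigma+F)$, which is big (the pullback of a big divisor under a generically finite surjective morphism is big), we get $h_A(P)=h_F(\rho_e(P))+O(1)$, since $\sigma\cap C=\emptyset$ forces $h_\sigma$ to be bounded on $C$.

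Along $\widehat T$, where $h_F(\rho_e(P))\to\infty$ because $\rho_e^*F$ restricts to an ample divisor on $C'$, the three estimates combine to
\[
h_{K_{V_e}}(P)-\left(\tfrac32-\delta\right)d(P)-\epsilon h_A(P)\ge\left(2\delta\chi(\co_X)-\epsilon-\tfrac Ne\right)h_F(\rho_e(P))+O(1),
\]
so for $\epsilon<2\delta\chi(\co_X)$ and $e>\frac{N}{2\delta\chi(\co_X)-\epsilon}$ the desired inequality holds for all but finitely many $P\in\widehat T$. I would then let $V$ be the union of the $V_e$ over a sequence $e\to\infty$, let the $C_i$ run through the components $C'$ of $\rho_e^{-1}(C)$ over all $m$, $C$, and all $e$ in the sequence, and take $g$ accordingly; $\bigcup_i C_i$ is Zariski-dense in $V$ because each $\rho_e$ is finite surjective and the $C$ are dense in $X$, and the ``in particular'' statement follows because for a fixed $\delta$ the $C_i$ lying over covers of large enough degree already form a dense subfamily on which the inequality (with $c_i=c_{[k(P):k]}$) holds for infinitely many $P$.

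The main obstacle is the loss term $\tfrac1e\rho_e^*D$ produced by the ramification of Vojta's cover: it shrinks $h_{K_V}$ and works against the inequality, so recovering the clean coefficient $\tfrac32$ --- and especially making the bound uniform over small $\delta>0$ --- forces $e$ to be taken large (growing as $\delta\to0$), which is precisely why $V$ must be built from infinitely many covers rather than one, and requires care to keep the curves $C_i$, the function $g$, and the Zariski-density of $\bigcup_i C_i$ all under control simultaneously. Secondary points to verify are that Vojta's cover can be taken nonsingular and defined over a number field, and that each Chevalley--Weil step is legitimate --- i.e., that the lifts $\widehat T$ genuinely avoid the ramification of $\rho_e$ and that $\widetilde T$ avoids that of $\phi$, which holds because these points are $S$-integral away from $D$.
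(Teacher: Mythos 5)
Your proposal follows the same route as the paper's proof: start from a Shioda-optimal semistable elliptic surface $X$ over $\mathbb{P}^1$ with $D$ the sum of its $N=2\chi(\co_X)+2$ singular fibers, take the curves, integral points, and estimate already established in the proof of Theorem~\ref{main}, and then pass to a ramified cover of $X$ that is \'etale over $X\setminus D$ so that the boundary contribution $m_{D,S}$ is absorbed into the canonical class of the cover; the Chevalley--Weil step controlling the discriminant of the lifted points is likewise the same. Your quantitative bookkeeping is also correct and matches the paper's, with your parameter $e$ playing exactly the role of the exponent $M$ in the Kummer cover $B\to\mathbb{P}^1$ (associated to $k(\sqrt[M]{t-t_1},\ldots,\sqrt[M]{t-t_{N-1}})$, followed by $V=X\times_{\mathbb{P}^1}B$) that the paper constructs explicitly rather than invoking \cite[Prop.~5.4.1]{Vo} as a black box.

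The gap is in the final assembly. You write ``let $V$ be the union of the $V_e$ over a sequence $e\to\infty$,'' but an infinite union of pairwise distinct projective varieties is not a single nonsingular projective variety (nor a scheme of finite type), and the theorem asserts the existence of one fixed $V$. The paper avoids this by fixing a single large $M$ and constructing one cover $V=X\times_{\mathbb{P}^1}B$ of $X$. The tension you correctly flag --- that the ramification parameter must be taken large as $\delta\to 0$, since the loss term $\tfrac{1}{e}\rho_e^*D$ costs $\tfrac{N}{e}h_F$ --- is present in the paper too (there $M$ is chosen after $\epsilon$, hence after $\delta$), but the paper resolves it by fixing $M$ once $\delta$ and $\epsilon$ are in hand, not by aggregating infinitely many covers. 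To repair your argument in the spirit of the paper, you should pick a single $e$ (equivalently $M$) large enough for the chosen $\delta,\epsilon$, set $V=V_e$, and then observe that for a fixed big $A=\rho_e^*A'$ the inequality only becomes easier for smaller $\epsilon$, rather than trying to union the $V_e$.

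Two smaller points: the condition you impose for the existence of the cover, ``$e\mid N$,'' is not what \cite[Prop.~5.4.1]{Vo} requires and is not needed --- the paper's Kummer construction works for any $M\ge 1$ because the fibers of $D$ sit over $k$-rational points of $\mathbb{P}^1$. And you should double-check that the \'etale locus of the cover contains the integral points before applying Chevalley--Weil; you note this, and it is indeed the reason the paper works with $D$-integral points throughout.
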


\begin{proof}
By Theorem \ref{Sh}, there exists a semistable elliptic surface $X$ over $\mathbb{P}^1$ with a section over a number field $k$ and $N=2\chi(\co_X)+2$ singular fibers.  Let $D$ be the sum of the singular fibers, which we may assume are all defined over $k$.  By Theorem~\ref{main} and its proof, there exist infinitely many distinct curves $C_i'\subset X$, $i\in \mathbb{N}$, with an infinite set of $D$-integral points $T_i$ of bounded degree, and an ample divisor $A'$, such that for any constant $c_i$ and sufficiently small $\epsilon>0$,
\begin{equation*}
m_{D,S}(P)+h_{K_X}(P)>\left(\frac{3}{2}-\delta\right)d(P)+\epsilon h_{A'}(P)+c_i,
\end{equation*}
for all but finitely many points $P$ in $T_i$.

We may assume one of the singular fibers lies over the point at infinity of $\mathbb{P}^1$.  Let $t_1,\ldots,t_{N-1}\in k$ be the projections of the other singular fibers.  Let $M$ be a large enough positive integer such that
\begin{equation*}
m_{D,S}(P)<\frac{M\epsilon}{2}h_{A'}(P)+O(1)
\end{equation*}
for all $P$ in $X(\kbar)\setminus D$.  Let $B$ be the nonsingular projective curve associated to $k(\sqrt[M]{t-t_1},\ldots,\sqrt[M]{t-t_{N-1}})$.  We have a natural morphism $\phi:B\to \mathbb{P}^1$ of degree $M^{N-1}$, with ramification index $M$ at each point above $t_1,\ldots,t_{N-1},\infty$ and unramified everywhere else.  Let $V=X\times_{\mathbb{P}^1} B$ and let $\pi:V\to X$ be the projection map.  From the above,  it follows easily that the ramification divisor $R$ of $\pi$ is given by $R=\frac{M-1}{M}\pi^*D$.  We also have the formula $K_V=\pi^*K_X+R$.  Since the set of points $T_i$ is $D$-integral, by Theorem \ref{dineq2} we have
\begin{equation*}
d(Q)\leq d(\pi(Q))+O(1)
\end{equation*}
for all points $Q$ in $\pi^{-1}(T_i)$.  Let $A=\pi^*A'$, a big divisor.  Then for all $Q\in \pi^{-1}(T_i)$, setting $P=\pi(Q)$, we have (up to $O(1)$)
\begin{align*}
h_{K_{V}}(Q)&=h_{\pi^*K_X}(Q)+h_R(Q)=h_{K_X}(P)+\frac{M-1}{M}h_D(P)\\
&\geq h_{K_X}(P)+\frac{M-1}{M}m_{D,S}(P)\\
&\geq h_{K_X}(P)+m_{D,S}(P)-\frac{\epsilon}{2} h_{A'}(P)\\
&\geq \left(\frac{3}{2}-\delta\right)d(P)+\frac{\epsilon}{2} h_{A'}(P)\\
&\geq \left(\frac{3}{2}-\delta\right)d(Q)+\frac{\epsilon}{2} h_{A}(Q).
\end{align*}
This yields the theorem, taking for each $i\in \mathbb{N}$, $C_i$ to be an appropriate curve in $\pi^{-1}(C_i')$.
\end{proof}

Finally, let us discuss some recent results in a related setting that are closely connected to the results of this section.  In a geometric setting, Autissier, Chambert-Loir, and Gasbarri having recently shown \cite{ACG}:
\begin{theorem}[Autissier, Chambert-Loir, Gasbarri]
\label{ACG}
Let $n\geq 2$ be an integer and $k$ an algebraically closed field of characteristic zero.  There exists a nonsingular projective variety $X$ over $k$ of dimension $n$, with canonical divisor $K$, and projective curves $C_i\subset X$, $i\in \mathbb{N}$, such that:
\begin{enumerate}
\item $\cup_{i\in \mathbb{N}}C_i$ is Zariski-dense in $X$.
\item Let $A$ be an ample divisor on $X$ and let $\delta>0$.  For all sufficiently small $\epsilon>0$,
\begin{equation*}
C_i.K> (n-\delta)\chi(C_i)+\epsilon C_i.A,
\end{equation*}
for all but finitely many $i\in \mathbb{N}$.
\end{enumerate}
\end{theorem}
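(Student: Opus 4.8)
The plan is to prove this by an explicit construction, reducing the whole statement to a single fact about \emph{unramified self-correspondences} of a curve. Fix an integer $n\geq 2$. I would take $X=B^n$, the $n$-fold self-product of a smooth projective curve $B$ over $k$ of genus $g_0\geq 2$; then $X$ is smooth projective of dimension $n$ with $K=K_X=\sum_{j=1}^n p_j^*K_B$ ample, so $X$ is of general type. Suppose $B$ carries an infinite family of (possibly reducible) curves $T_m\subset B\times B$ whose two projections to $B$ are both finite \'etale, of degree $d_m$, with $d_m\to\infty$. For $n=2$ take the $C_i$ to be the irreducible components of the $T_m$; for general $n$ take the $C_i$ to be the irreducible components of the chain loci
\[
\{(x_0,\dots,x_{n-1})\in B^n : (x_{j-1},x_j)\in T_m \text{ for } j=1,\dots,n-1\}
\]
(one may equally use different members of the family in different slots). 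Every coordinate projection $\rho_j\colon C_i\to B$ is then built from the \'etale maps $T_m\to B$ by repeated fibre product and composition, hence is itself finite \'etale and surjective; Riemann--Hurwitz applied to $\rho_j$ gives $2g(C_i)-2=(\deg\rho_j)(2g_0-2)$ for every $j$, so all the degrees $\deg\rho_j$ equal a common value $d_i=(2g(C_i)-2)/(2g_0-2)$, and therefore
\[
K_X.C_i=\sum_{j=1}^n(\deg\rho_j)(2g_0-2)=n\,d_i(2g_0-2)=n\bigl(2g(C_i)-2\bigr).
\]
Thus $K_X.C_i$ equals $n$ times $2g(C_i)-2$ \emph{exactly}; this is the heart of the matter, and the reason the constant comes out to the full $\dim X$ (rather than something smaller, as in Theorems~\ref{main} and \ref{D0}) is simply that the correspondences are genuinely unramified, so no discriminant term is lost.

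Granting this, the inequality is formal. Let $A$ be an ample divisor on $X$ and $\delta>0$. Choose $c$ with $cK_X-A$ effective; then $A.C_i\leq c\,K_X.C_i=cn(2g(C_i)-2)$ for every $C_i$ not contained in the fixed proper closed set $\Supp(cK_X-A)$, hence --- using the Zariski density asserted below --- for all but finitely many $i$. For those $i$,
\[
K_X.C_i-(n-\delta)\bigl(2g(C_i)-2\bigr)=\delta\bigl(2g(C_i)-2\bigr)\geq \frac{\delta}{cn}\,A.C_i>\epsilon\,A.C_i
\]
as soon as $\epsilon<\delta/(cn)$. Reading the Euler characteristic term $\chi(C_i)$ of Theorem~\ref{ACG} as $2g(\widetilde{C_i})-2$ (the analogue of the discriminant, which is $>0$ here since $g(C_i)\geq g_0\geq 2$), this is precisely the asserted inequality, for all sufficiently small $\epsilon$ and all but finitely many $i$.

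So everything hinges on producing $B$ together with a family of unramified self-correspondences of unbounded degree such that the resulting curves $C_i$ are Zariski dense in $X$; this is the main obstacle and the only genuinely special input. The cleanest supply is a compact Shimura curve: take $B=\Gamma\backslash\mathbb{H}$ for $\Gamma$ a torsion-free congruence subgroup of the units of a maximal order in an indefinite division quaternion algebra over $\mathbb{Q}$, with the level deep enough that $g(B)\geq 2$. Since $\Gamma$ is torsion-free and the algebra is division, $B$ is a smooth compact curve with no elliptic points and no cusps. For each prime $p$ away from the discriminant and the level, the Hecke correspondence $T_p\subset B\times B$ has bidegree $(p+1,p+1)$, and because the relevant congruence groups $\Gamma\cap\alpha_p\Gamma\alpha_p^{-1}$ are again torsion-free, \emph{both} projections $T_p\to B$ are honest covering maps --- finite \'etale with no ramification whatsoever --- which is exactly what makes the bookkeeping above come out sharp. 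Zariski density of $\bigcup_i C_i$ is immediate for $n=2$: the bidegree of $T_p$ tends to infinity, while each component of $T_p$ would have to sit among the finitely many components of any fixed proper closed subset of $B^2$; for $n>2$ one combines this with the standard fact that the family $\{T_p\}$ has no common proper invariant subvariety (a consequence of the dynamics of Hecke correspondences), from which density of $\bigcup_i C_i$ in $B^n$ follows by an invariance argument. I expect this density step --- together with checking that the chosen arithmetic model really does make the $T_p$ unramified of the stated bidegree --- to be the delicate part; the remainder is Riemann--Hurwitz bookkeeping. (Any curve $B$ admitting such a family of unramified correspondences would serve equally well; Shimura curves merely furnish the most convenient source.)
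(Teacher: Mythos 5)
First, a structural point: the paper you are reading does \emph{not} prove Theorem~\ref{ACG}; it is quoted verbatim from~\cite{ACG}, so there is no internal proof to compare against. That said, your proposal is essentially the construction that Autissier, Chambert-Loir, and Gasbarri use: $X$ a product of $n$ copies of a compact Shimura curve $B$ of genus $g_0\geq 2$, curves $C_i$ built as components of ``chain loci'' of unramified Hecke correspondences, and the key identity $K_X.C_i=n\bigl(2g(C_i)-2\bigr)$ obtained from Riemann--Hurwitz applied to the \'etale coordinate projections. This identity is indeed the heart of the matter, your derivation of it is correct, and it even survives passing to normalizations (both the geometric genus and the relevant intersection numbers are computed there, and a composite of unramified maps between smooth curves forces each intermediate map to be unramified), so you need not worry whether the chain loci embed.

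Two steps need shoring up. The minor one: choosing $c$ with $cK_X-A$ merely effective only controls $A.C_i$ for $C_i\not\subset\Supp(cK_X-A)$, and the clause ``hence, by density, for all but finitely many $i$'' does not follow --- a Zariski-dense union of curves can still have infinitely many members inside a fixed divisor. Instead, use that $K_X=\sum_j p_j^*K_B$ is \emph{ample} on $B^n$, pick $c$ so that $cK_X-A$ is ample, hence nef; then $A.C_i\leq c\,K_X.C_i$ holds for \emph{every} $i$, with no exceptional set. The substantive gap, which you correctly flag, is Zariski density of $\bigcup_i C_i$ for $n>2$. Your one-sentence appeal to ``the dynamics of Hecke correspondences'' gestures at the right input (density, indeed equidistribution, of Hecke orbits), but as written it is not an argument: density of $\bigcup_p T_p$ in $B^2$ does not formally imply density of the chain loci in $B^n$, since a fixed proper hypersurface of $B^n$ can surject onto $B^2$ under every adjacent-pair projection while still being proper. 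You need to actually invoke and apply a density-of-Hecke-orbits statement in each successive slot (or argue directly that the family of chain loci has no common proper invariant subvariety) to close this. The remaining pieces --- the reduction to unramified self-correspondences, the Riemann--Hurwitz bookkeeping, the formal $\delta/\epsilon$ manipulation, and the observation that this is precisely why the full factor $n=\dim X$ appears here while only $3/2$ appears in Theorems~\ref{main} and~\ref{D0} --- are sound.
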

Here, for a projective curve $C$, we let $\chi(C)=2g(C)-2$ where $g(C)$ is the geometric genus of $C$.  The terms $C.K$, $C.A$, and $\chi(C)$ are analogous to (the unnormalized versions of) $h_K(P)$, $h_A(P)$, and $d(P)$, respectively, in Conjecture \ref{Vgeneral}.  As noted in \cite{ACG}, the gonalities of the curves $C_i$ constructed in \cite{ACG} in Theorem \ref{ACG} go to infinity with $i$ (the gonality is an analogue of the degree $[k(P):k]$).  Theorem \ref{ACG} shows that in a geometric setting the analogue of \eqref{Vineq} does not hold uniformly without at least a $\dim X$ factor in front of (the analogue of) the $d(P)$ term.  

On the other hand, for nonsingular curves on minimal surfaces of general type, Miyaoka \cite{Miy} proved:
\begin{theorem}[Miyaoka]
Let $X$ be a nonsingular minimal complex projective surface of general type with canonical divisor $K$.  Let $A$ be an ample divisor on $X$ and $\epsilon>0$.  Then 
\begin{equation*}
C.K\leq \frac{3}{2}\chi(C)+\epsilon C.A+O(1),
\end{equation*}
for all nonsingular projective curves $C$ in $X$.
\end{theorem}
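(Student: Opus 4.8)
The plan is to rewrite the asserted bound, via adjunction, in the equivalent form $K.C+3C^2\ge -2\epsilon\,C.A-O(1)$, dispose of the curves of non-negative self-intersection by hand, and attack the rest with an \emph{orbifold} form of the Miyaoka-Yau inequality applied to the pair $(X,tC)$ with the coefficient $t\in(0,1)$ allowed to tend to $0$. Writing $\chi=\chi(C)=2g(C)-2$, adjunction gives $\chi=K.C+C^2$, so $\tfrac32\chi-K.C=\tfrac12K.C+\tfrac32C^2$, which is the equivalence above. When $C^2\ge 0$ the bound is immediate: since $X$ is minimal of general type, $K_X$ is nef, so there is no rational curve with $C^2\ge 0$ (it would satisfy $K.C=-2-C^2<0$); hence $g(C)\ge 1$, $\chi\ge 0$, and $K.C=\chi-C^2\le\chi\le\tfrac32\chi$. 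So all the content lies in the curves with $C^2<0$.

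For such a $C$ I would consider the $\mathbb{Q}$-divisor $\Delta_t=tC$, $t\in(0,1)\cap\mathbb{Q}$. The pair $(X,\Delta_t)$ is klt and log smooth and $K_X+\Delta_t$ is big, so I would invoke the orbifold Miyaoka-Yau-Sakai inequality $\bar c_1(X,\Delta_t)^2\le 3\,\bar c_2(X,\Delta_t)$, where $\bar c_1=K_X+tC$ and $\bar c_2=c_2(X)+t\chi$ is the orbifold Euler number. Substituting $C^2=\chi-K.C$ and expanding turns this into $K_X^2+t(2-t)K.C+t^2\chi\le 3c_2(X)+3t\chi$, that is,
\begin{equation*}
K.C\;\le\;\frac{3-t}{\,2-t\,}\,\chi\;+\;\frac{3c_2(X)-K_X^2}{t(2-t)}.
\end{equation*}
The coefficient $\frac{3-t}{2-t}$ decreases to $\tfrac32$ as $t\to 0^+$, while for each fixed $t$ the last term is a constant $c_t$ independent of $C$. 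Hence, given any $\delta>0$, a small enough choice of $t$ yields $K.C\le(\tfrac32+\delta)\chi+c_t$ for every curve with $C^2<0$.

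It then remains to absorb the $\delta\chi$ error. Fix a positive integer $m$ with $mA-K_X$ ample, so that $K.C<m\,A.C$ for every irreducible curve $C$. If $g(C)\ge 1$ then $0\le\chi\le K.C\le m\,A.C$ (the middle inequality because $C^2<0$), so the choice $\delta=\epsilon/m$ gives $(\tfrac32+\delta)\chi\le\tfrac32\chi+\epsilon\,A.C$, and the bound follows with $O(1)=c_{\epsilon/m}$. If $g(C)=0$ then $\chi=-2$ and the displayed estimate already bounds $K.C$ by a constant; since $\epsilon\,C.A\ge 0$, the desired inequality then holds after enlarging the $O(1)$ term. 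Combined with the case $C^2\ge 0$, this would finish the argument.

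The hard part is the orbifold Miyaoka-Yau inequality itself, which is the real weight of Miyaoka's theorem. One expects the delicacy to be that precisely for the relevant curves (those with $C^2<0$) the class $K_X+tC$ need not be nef, so the inequality cannot be quoted in its most elementary form; one must instead run the log minimal model program for the pair $(X,tC)$ and track how the orbifold Chern numbers change, or else appeal to Langer's general Bogomolov-type inequality for the logarithmic tangent sheaf. The underlying engine is, in any case, Miyaoka's generic semipositivity of $\Omega^1_X$ for non-uniruled surfaces together with Bogomolov's inequality on symmetric powers; everything outside of that input is the elementary bookkeeping indicated above.
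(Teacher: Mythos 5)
The paper does not prove this theorem: it is stated as a quoted result attributed to Miyaoka, with a citation to \cite{Miy}, so there is no internal proof to compare against. With that caveat, your reconstruction is an accurate picture of the standard route to this inequality (essentially Sakai's and Miyaoka's own method via the logarithmic/orbifold Bogomolov--Miyaoka--Yau inequality for the pair $(X,tC)$ and a limit $t\to 0^+$). The elementary parts are all correct: the adjunction rewriting $\tfrac32\chi-K.C=\tfrac12K.C+\tfrac32C^2$, the disposal of the case $C^2\ge 0$ using nefness of $K_X$ and $g(C)\ge 1$, the algebra giving the coefficient $\tfrac{3-t}{2-t}$ with error $\tfrac{3c_2-K^2}{t(2-t)}$, and the absorption of the $\delta\chi$ term via a fixed $m$ with $mA-K_X$ ample together with the chain $0\le\chi\le K.C\le m\,A.C$ when $g\ge 1$, with the rational curves bounded separately.

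The genuine gap, which you flag yourself, is the orbifold Miyaoka--Yau--Sakai inequality $\bar c_1(X,tC)^2\le 3\bar c_2(X,tC)$ in the needed generality. In its elementary form this requires some positivity of $K_X+tC$ (e.g.\ nefness), and that hypothesis fails exactly for the curves with $C^2<0$ that carry all the content; moreover a single $t$ cannot be chosen to restore nefness uniformly over all such $C$ (for $(-2)$-curves no $t>0$ works at all). Making this step rigorous --- by running the log MMP for $(X,tC)$ and controlling how the orbifold Chern numbers change, or by invoking a Bogomolov-type inequality for the logarithmic cotangent sheaf via Miyaoka's generic semipositivity --- is precisely the substance of Miyaoka's theorem. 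So your proposal is the correct architecture with the hard analytic/geometric input left as a black box, not a self-contained proof.
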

It is unclear if the appearance of the coefficient $\frac{3}{2}$ in both Miyaoka's result and the results given here is coincidence or holds some deeper significance.

\bibliography{VC}
\end{document}